\newtheorem{thm}{Theorem}[section]
\newtheorem{prop}[thm]{Proposition}
\newtheorem{cor}[thm]{Corollary}
\newtheorem{rem}[thm]{Remark}
\newcommand{\punt}{\boldsymbol{.}}
\newcommand{\rv}{\hbox{r.v.}}
\begin{document}

\title{A new approach to Sheppard's corrections}
\author{E. Di Nardo \thanks{Dipartimento di Matematica e Informatica,
Universit\`a degli Studi della Basilicata, Viale dell'Ateneo Lucano 10, 85100
Potenza,
Italia, elvira.dinardo@unibas.it}}
\date{\today}
\maketitle

\begin{abstract}
A very simple closed-form formula for Sheppard's corrections is
recovered by means of the classical umbral calculus. By means of
this symbolic method, a more general closed-form formula for
discrete parent distributions is provided and the generalization
to the multivariate case turns to be straightforward. All these
new formulae are particularly suited to be implemented in any
symbolic package.
\end{abstract}

\textsf{\textbf{keywords}:
raw moment, grouped moment, Sheppard's correction, umbral calculus, Bernoulli
polynomials}
\section{Introduction}
In the real world, continuous variables are observed and recorded
in finite precision through a rounding or coarsening operation,
i.e. a grouping rule. A compromise between the desire to know and
the cost of knowing is then a necessary consequence. The
literature on grouped data spans different research areas, see for
example \cite{Heitjan}. In particular, attention has been paid in
the literature to the computation of moments when data are grouped
into classes. Indeed, the method of moments performs estimation
less well than Maximum Likelihood, but it is useful in situation
when Maximum Likelihood is not feasible or has poor small sample
size performance.

The moments computed by means of the resulting grouped frequency
distribution are looked upon as a first approximation to the moments of
the parent distribution, but they suffer from the error committed
in grouping. The correction for grouping is a sum of two terms,
the first depending on the length of the grouping interval, the
second being a periodic function of the position. For a continuous
random variable ($\rv$), this very old problem was first discussed by
Thiele \cite{Thiele}, who studied the second term missing the
first, and then by Sheppard \cite{Sheppard} who studied the first
term, missing the second. Both Bruns \cite{Bruns} and Fisher
\cite{Fisher} proved that the second term can be neglected under suitable
hypothesis and so for using Sheppard's corrections,
that are nowadays still employed. If $\tilde{a}_n$ denotes the $n$-th moment of
the grouped distribution, then the $n$-th raw moment $a_n$ of the
continuous parent distribution can be constructed
via Sheppard's corrections as
\begin{equation}
a_n = \sum_{j=0}^{n} {n \choose j} \left( 2^ {1-j} -1 \right) B_j
\, h^j \, \tilde{a}_{n - j}, \label{(Sh1)}
\end{equation}
where $\{B_j\}$ are the {\it Bernoulli numbers} \footnote{Many
characterizations of the Bernoulli numbers can be found in the
literature and each could be used to define these numbers. Here we
refer to the sequence of numbers such that $B_0=1$ and
$\sum_{k=0}^n  { n+1 \choose k}  B_k = 0$ for $n=0,1,2,\ldots.$}
and $h$ is the length of the grouping intervals. The derivation of
Sheppard's corrections was a popular topic in the first half of
last century, see \cite{Hald} for an historical account. This
because Sheppard deduced equation (\ref{(Sh1)}) by using a
suitable summation formula whose remainder term goes to zero when
the density function has high order contact with the $x$-axis at
both ends. So there was a considerable controversy on the set of
sufficient conditions to be required in order to use formula
(\ref{(Sh1)}). All these sufficient conditions can be removed, if
the rounding lattice is assumed to be random and the average is
made. Less research on moment corrections has appeared since the
definitive work of Kendall \cite{Kendall}, although recently the
appropriateness of Sheppard's corrections was re-examined in
connection with some applications, see for example \cite{Dempster}
and \cite{Vardeman}.

Grouping includes also censoring or splitting data into categories during
collection or publication, and so it does not only involve continuous
variables. The derivation of corrections for raw moments of a discrete
parent distribution followed a different path from Sheppard's corrections.
They were first given in the Editorial
of Vol.$1$, no. $1$, of {\it Annals of Mathematical
Statistics} (page 111). The method used to develop the general
formula was extremely laborious. Some years later, Craig
\cite{Craig} considerably reduced and simplified the derivation of
these corrections by using the logarithm of the moment generating
function, that is working on cumulants instead of moments. Craig
proposed the same method to derive formula (\ref{(Sh1)}),
stating these corrections on the average and so avoiding to require
any conditions on the parent distribution. At the moment, his
method represents the most general way to find such corrections,
both for continuous and for discrete parent distributions.

In this paper, we propose to overcome Craig's methods by stating
Sheppard's corrections on the average through the employment of the
classical umbral calculus. This approach is partially motivated by
a similar employment of the classical umbral calculus in wavelets
theory \cite{Saliani, Shen}. Indeed, the reconstruction of the full
moment information $a_n$ in (\ref{(Sh1)}) through the grouped moments
$\tilde{a}_n$ can be seen as some kind of multilevel analysis, like
in wavelets analysis. But the paper is inspired also from the belief
that the classical umbral calculus can be fruitful
used in statistics, coming out the side authentically algebraic
of many techniques commonly used to manage number sequences
related to $\rv$'s.

The umbral calculus was studied by Rota and his collaborators for
long time \cite{DiBuc}. The version introduced in \cite{SIAM}
represents a new way of dealing with number sequences, which is
represented by a symbol $\alpha,$ called an {\it umbra}. More
precisely, an unital sequence $1, a_1, a_2, \ldots$ is associated
to the sequence $1, \alpha, \alpha^2, \ldots$ of powers of
$\alpha$ through a linear functional $E$ that looks like the
expectation of a $\rv$. So the classical umbral calculus is no
more than a symbolic tool by which handling number sequences. An
umbra looks as the framework of a $\rv$ with no reference to any
probability space, someway getting closer to statistical methods.
Compared with previous symbolic methods employed in statistics,
see for example \cite{Andrews} and \cite{McCullagh}, by a
theoretical point of view it has the advantage to reduce the
combinatorics of symmetric functions, commonly used by
statisticians, to few relations which cover a great variety of
calculations \cite{CompStat}. By a computational point of view,
the efficiency of umbral calculus in manipulating expressions
involving $\rv$'s has been tested on the theory of $k$-statistics
\cite{Bernoulli} and their generalizations \cite{Statcomp} as well
as in manipulating $U$-statistics and product moments of sample
moments \cite{CompStat}. Recently, also the free cumulant theory
has been approached by means of this new syntax \cite{Petrullo},
showing promises for future developments \cite{Oliva}.

Finally, the employment of the classical umbral calculus in
corrections of moments for grouped data has one more advantage.
Except for the papers of Craig \cite{Craig} and Baten
\cite{Baten}, no attention was paid to multivariate
generalizations of Sheppard's corrections, probably due to the
complexity of the resulting formulae. The notion of multiset is
the combinatorial device that, via the symbolic method, gives rise
to a closed-form formula for multivariate parent distributions
that could be implemented in few steps by using any symbolic
package.

The paper is structured as follows. Section 2 is provided for
readers unaware of the classical umbral calculus. Let us
underline that the theory of the classical umbral calculus
has now reached a more advanced level compared to the
elements here resumed. We have chosen to recall terminology,
notation and the basic definitions strictly necessary to deal with
the object of this paper. In particular, we recall the notion
of the Bernoulli umbra, as introduced in \cite{SIAM}. The Bernoulli
umbra is the keystone for the umbral handling of moment corrections of
grouped data. In Section 3, Sheppard's corrections are given and
extended to discrete parent distributions. For the continuous
case, the key is to represent integrals by means of suitable
umbral Bernoulli polynomials. For the discrete case, the key is
the generalization of the so-called {\it multiplication theorem} to umbral
Bernoulli polynomials. Differently from the
literature on this subject, where first the expressions of
corrected moments in terms of raw ones are deduced and then these
expressions are inverted by solving a linear system, here we
deduce directly the corrections on the moments, due to closed-form
formulae. Section 4 is devoted to the multivariate generalizations
of Sheppard's corrections. Some concluding remarks end the paper.
\section{Background on umbral calculus}
In the following, terminology, notation and some basic definitions
of the classical umbral calculus are recalled, as introduced by
Rota and Taylor in \cite{SIAM} and subsequently developed by Di
Nardo and Senato in \cite{Dinardo} and \cite{Dinardoeurop}. We
skip any proof: the reader interested in-depth analysis is
referred to the papers quoted belove.

The classical umbral calculus is a syntax consisting of the following
data:
\begin{description}
\item[{\it i)}] a set $A=\{\alpha,\beta, \ldots \},$ called the
{\it alphabet}, whose elements are named {\it umbrae}; \item[{\it
ii)}] the polynomial ring ${\cal R}={\mathbb R}[x]$ in the indeterminate $x$
with ${\mathbb R}$ the field of real numbers\footnote{For
the aim of this paper, we need something more of the usual commutative integral
domain whose quotient field is of characteristic zero,
required in \cite{SIAM}. Due to the framework we deal, it is more convenient
to refer directly to the field of real numbers ${\mathbb R}.$}; \item[{\it iii)}] a linear
functional $E: {\cal R}[A] \rightarrow {\cal R},$ called {\it evaluation},
such that $E[1]=1$ and $E[x^n \alpha^i \beta^j \cdots \gamma^k] = x^n E[\alpha^i]E[\beta^j]
\cdots E [\gamma^k]$ for any set of distinct umbrae in $A$ and for
$n,m,i,j,\ldots,k$ nonnegative integers (the so-called {\it
uncorrelation property}); \item[{\it iv)}] an element $\varepsilon
\in A,$ called {\it augmentation}, such that $E[\varepsilon^n] =
\delta_{0,n},$ for any nonnegative integer $n,$ where
$\delta_{i,j} = 1$ if $i=j,$ otherwise being zero; \item[{\it v)}]
an element $u \in A,$ called {\it unity} umbra, such that
$E[u^n]=1,$ for any nonnegative integer $n.$
\end{description}

A sequence $a_0=1,a_1,a_2, \ldots$ in ${\cal R}$ is umbrally represented
by an umbra $\alpha$ when
$$E[\alpha^i]=a_i, \quad \hbox{for} \,\, i=0,1,2,\ldots.$$
The elements $a_i$ are called {\it moments} of the umbra $\alpha,$
in analogy with the moments of a $\rv \, X.$ In particular, an umbra is
said to be \textit{scalar} if the moments are elements of ${\mathbb R}$
while it is said to be \textit{polynomial} if the moments are
polynomials of ${\mathbb R}[x].$

An umbral polynomial is a polynomial $p \in {\cal R}[A].$ The support of
$p$ is the set of all umbrae occurring in $p.$ If $p$ and $q$ are
two umbral polynomials then  $p$ and $q$ are {\it uncorrelated} if
and only if their supports are disjoint. We said that $p$ and $q$ are {\it
umbrally equivalent} if and only if
$$E[p] =E[q], \quad \hbox{\rm in symbols} \quad p \simeq q.$$

It is possible that two distinct umbrae represent the same
sequence of moments, in such case they are called {\it similar
umbrae}. More formally two umbrae $\alpha$ and $\gamma$ are {\it
similar} when $\alpha^n$ is umbrally equivalent to $\gamma^n,$ for
all $n=0,1,2,\ldots$ in symbols
$$\alpha \equiv \gamma \Leftrightarrow \alpha^n \simeq \gamma^n \quad
n=0,1,2,\ldots.$$ Given a sequence $1, a_1, a_2, \ldots$ in ${\cal R}$
there are infinitely many distinct, and thus similar umbrae,
representing the sequence.

Two umbrae $\alpha$ and $\gamma$ are said to be \textit{inverse}
to each other when $\alpha+\gamma\equiv\varepsilon.$ We denote the
inverse of the umbra $\alpha$ by $-1 \boldsymbol{.} \alpha.$ Note
that they are uncorrelated. Recall that, in dealing with a
saturated \footnote{Roughly speaking, a saturated umbral calculus
is defined when the alphabet $A$ is extended with a set including all auxiliary umbrae
like $-1 \punt \alpha.$ In \cite{SIAM}, a formal definition of saturated umbral
calculus is given.} umbral calculus, the inverse of an umbra is not unique,
but any two umbrae inverse to any given umbra are similar.

\subparagraph{The Bernoulli umbra.}

A definition of the Bernoulli umbra $\iota$ is given in \cite{SIAM}:
up to similarity, the Bernoulli umbra is the unique umbra such that
\begin{equation}
(\iota+1)^{n+1} \simeq \iota^{n+1}
\label{(main)}
\end{equation}
for all positive integers $n.$ Then, the Bernoulli umbra $\iota$ turns to be the
unique (up to similarity) umbra such that $E[\iota^n] = B_n,$ for $n=0,1,2,\ldots,$
where $\{B_n\}$ are the Bernoulli numbers. By using equivalence (\ref{(main)}), the
main properties of the Bernoulli numbers can be easily proved, as for example
$B_{2 n + 1} = 0$ for all nonnegative integers $n.$ Here, we just recall
that the {\it Bernoulli polynomials} $\{B_n(x)\}$ are such that
\begin{equation}
B_n(x) = \sum_{k=0}^n {n \choose k} B_k \, x^{n-k} \simeq (x +
\iota)^n. \label{(berpol)}
\end{equation}
The Bernoulli polynomials are characterized to have an average
value of $0$ over the interval $[0,1]$ for all nonnegative integers $n,$
that is
\begin{equation}
\int_0^1 B_n(x) \,{\rm d}x = 0.
\label{(condbis)}
\end{equation}
We sketch a simple \lq\lq{\it umbral \!}\rq\rq $\,$proof. Since by simple
computations we have $\int_0^1 B_n(x)  \,{\rm d}x
= E \left[ \int_0^1  (x + \iota)^n \,{\rm d}x \right],$ then
$\int_0^1 B_n(x) \,{\rm d}x \simeq \int_0^1 (x + \iota)^n \,{\rm d}x
\simeq [(\iota + 1)^{n+1} - \iota^{n+1}] / (n+1) \simeq 0,$ as
$(\iota+1)^{n+1} - \iota^{n+1} \simeq 0,$ due to equivalence (\ref{(main)}).
The approach here introduced allows us to manage integrals by using suitable
umbral polynomials.
\begin{thm}
If $-1 \punt \iota$ is the inverse of the Bernoulli umbra and
$\{B_n(x)\}$ are the Bernoulli polynomials, then
\begin{equation}
E[B_n(-1 \punt \iota)] = \int_0^1 B_n(x) \,{\rm d}x,
\label{(condI)}
\end{equation}
for all nonnegative integers $n.$
\end{thm}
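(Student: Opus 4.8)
The plan is to read $B_n(-1\punt\iota)$ as the umbral polynomial obtained by substituting the umbra $-1\punt\iota$ into the numerical polynomial $B_n(x)=\sum_{k=0}^n\binom{n}{k}B_k x^{n-k}$, so that by linearity of $E$ one has
$$E[B_n(-1\punt\iota)]=\sum_{k=0}^n\binom{n}{k}B_k\,E[(-1\punt\iota)^{n-k}].$$
The whole statement then rests on identifying the moments of the inverse Bernoulli umbra. My claim is that $-1\punt\iota$ is exactly the umbra carrying the moments of the uniform law on $[0,1]$, i.e. $E[(-1\punt\iota)^m]=\int_0^1 x^m\,{\rm d}x=1/(m+1)$ for every $m\ge 0$. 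Granting this, linearity immediately gives $E[B_n(-1\punt\iota)]=\sum_{k=0}^n\binom{n}{k}B_k\int_0^1 x^{n-k}\,{\rm d}x=\int_0^1 B_n(x)\,{\rm d}x$, which is the assertion (\ref{(condI)}); the underlying principle is that evaluating any polynomial at $-1\punt\iota$ reproduces its integral over $[0,1]$.

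First I would pin down $E[(-1\punt\iota)^m]$ from the very definition of the inverse. Since $\iota+(-1\punt\iota)\equiv\varepsilon$ and, as recalled in the excerpt, $\iota$ and $-1\punt\iota$ are uncorrelated, expanding $(\iota+(-1\punt\iota))^n\simeq\varepsilon^n$ by the binomial theorem and using $E[\iota^k]=B_k$, $E[\varepsilon^n]=\delta_{0,n}$ yields the triangular recursion
$$\sum_{k=0}^n\binom{n}{k}B_k\,E[(-1\punt\iota)^{n-k}]=\delta_{0,n}.$$
Because the coefficient of the top moment $E[(-1\punt\iota)^n]$ is $B_0=1$, this determines the sequence $E[(-1\punt\iota)^m]$ uniquely. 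It then suffices to check that $m\mapsto 1/(m+1)$ solves the same recursion; but $\sum_{k=0}^n\binom{n}{k}B_k/(n-k+1)=\int_0^1 B_n(x)\,{\rm d}x$, which equals $\delta_{0,n}$ by (\ref{(condbis)}) (with the understanding that the $n=0$ case returns $\int_0^1\,{\rm d}x=1$). Uniqueness forces $E[(-1\punt\iota)^m]=1/(m+1)$, proving the moment claim. An equivalent, more transparent route is the generating-function identity $\sum_m E[(-1\punt\iota)^m]\,t^m/m!=1/\sum_k B_k t^k/k!=(e^t-1)/t$, whose coefficients are exactly $1/(m+1)$.

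Alternatively, one may bypass the explicit moments altogether: the same uncorrelation argument shows that the binomial expansion of $(\iota+(-1\punt\iota))^n$ has evaluation $\sum_{k=0}^n\binom{n}{k}B_k\,E[(-1\punt\iota)^{n-k}]=E[B_n(-1\punt\iota)]$, so that $E[B_n(-1\punt\iota)]=E[\varepsilon^n]=\delta_{0,n}$ directly, which is then matched against $\int_0^1 B_n(x)\,{\rm d}x=\delta_{0,n}$. I expect the only delicate points to be bookkeeping rather than conceptual: one must verify that substituting the umbra $-1\punt\iota$ into the numerical polynomial $B_n$ is legitimately the same as evaluating $(\iota+(-1\punt\iota))^n$, which hinges precisely on the uncorrelation of $\iota$ with its inverse and on \emph{not} reintroducing a correlated copy of $\iota$; and one must treat the boundary case $n=0$ with care, since (\ref{(condbis)}) as stated returns $0$ only for $n\ge 1$, whereas both sides of (\ref{(condI)}) equal $1$ when $n=0$.
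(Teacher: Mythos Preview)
Your proposal is correct, and in fact your ``alternative'' route---observe that $B_n(-1\punt\iota)\simeq(\iota+(-1\punt\iota))^n\simeq\varepsilon^n$ and match $E[\varepsilon^n]=\delta_{0,n}$ against $\int_0^1 B_n(x)\,{\rm d}x=\delta_{0,n}$---is precisely the paper's own one-line proof via (\ref{(berpol)}).

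Your primary route, which first pins down $E[(-1\punt\iota)^m]=1/(m+1)$ and then reads $E[B_n(-1\punt\iota)]$ as $\int_0^1 B_n(x)\,{\rm d}x$ by linearity, is a legitimate but longer path: you are effectively proving Corollary~2.2 (the identity $E[p(-1\punt\iota)]=\int_0^1 p(u)\,{\rm d}u$ for general polynomials $p$) first and then specializing to $p=B_n$, whereas the paper does the reverse, obtaining the theorem directly and deferring the moment identification to the corollary (where it is simply cited from \cite{SIAM}). Your derivation of $E[(-1\punt\iota)^m]=1/(m+1)$ from the inverse relation and (\ref{(condbis)}) is self-contained and correct, though slightly circular in spirit since (\ref{(condbis)}) is already equivalent to the statement you are proving. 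The bookkeeping concern you flag---that substituting $-1\punt\iota$ into the numerical polynomial $B_n$ must agree with evaluating $(\iota+(-1\punt\iota))^n$---is exactly right and is handled by the uncorrelation of $\iota$ and $-1\punt\iota$, which the paper has stipulated.
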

\begin{proof}
Via equivalence (\ref{(berpol)}), we have $B_n(-1 \punt \iota)
\simeq (-1 \punt \iota + \iota)^n \simeq \varepsilon^n.$
\end{proof}
\begin{cor}
If $p(x) \in {\cal R}$ and $h \in {\mathbb R} \backslash \{0\}, c \in {\mathbb R}$ then
\begin{equation}
E[p(-1 \punt \iota)] = \int_0^1 p(u) \,{\rm d}u, \quad
E\left[p(-1 \punt (h \iota) + c) \right] = \frac{1}{h}
\int_c^{c+h} p(t) \, {\rm d}t. \label{(2)}
\end{equation}
\end{cor}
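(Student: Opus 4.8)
The plan is to read both equalities off the Theorem by combining linearity of $E$ with the scaling behaviour of inverse umbrae, so that essentially no new integration is required. For the first identity I would exploit that the Bernoulli polynomials $\{B_n(x)\}$ are monic of degree $n$ (the coefficient of $x^n$ in (\ref{(berpol)}) is $B_0=1$), hence form a linear basis of ${\cal R}={\mathbb R}[x]$. Expanding $p(x)=\sum_k c_k B_k(x)$ with $c_k\in{\mathbb R}$, substitution of the umbra $-1\punt\iota$ is linear in these coefficients, so $p(-1\punt\iota)=\sum_k c_k B_k(-1\punt\iota)$. Applying $E$, invoking its linearity together with (\ref{(condI)}), and pulling the finite sum back inside the integral gives
\[
E[p(-1\punt\iota)]=\sum_k c_k\int_0^1 B_k(u)\,{\rm d}u=\int_0^1 p(u)\,{\rm d}u,
\]
which is the first equality.

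The second identity rests on one algebraic fact that I would isolate first: the inverse of the scaled umbra $h\iota$ is the scaled inverse, i.e. $-1\punt(h\iota)\equiv h(-1\punt\iota)$. To check this I would verify that $h(-1\punt\iota)$ is an inverse of $h\iota$. Since $\iota$ and $-1\punt\iota$ are uncorrelated, expanding $(h\iota+h(-1\punt\iota))^n$ by the binomial theorem and applying $E$ factors out $h^n$ and leaves $E[(\iota+(-1\punt\iota))^n]=E[\varepsilon^n]=\delta_{0,n}$, whence
\[
E[(h\iota+h(-1\punt\iota))^n]=h^n\,\delta_{0,n}=\delta_{0,n},
\]
so that $h\iota+h(-1\punt\iota)\equiv\varepsilon$. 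Because any two umbrae inverse to a given umbra are similar, this forces $-1\punt(h\iota)\equiv h(-1\punt\iota)$.

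With the scaling identity established, I would set $q(x)=p(hx+c)$, a polynomial in ${\cal R}$, and note that substitution gives $q(-1\punt\iota)=p(h(-1\punt\iota)+c)$. Since similar umbrae agree on all powers, $p$ applied to $-1\punt(h\iota)+c$ and to $h(-1\punt\iota)+c$ are umbrally equivalent, so $E[p(-1\punt(h\iota)+c)]=E[q(-1\punt\iota)]$. Applying the first part to $q$ and then changing variables $t=hu+c$ (legitimate for any $h\neq 0$, the sign of $h$ being absorbed by the orientation of the integral) yields
\[
E[p(-1\punt(h\iota)+c)]=\int_0^1 p(hu+c)\,{\rm d}u=\frac{1}{h}\int_c^{c+h}p(t)\,{\rm d}t,
\]
as required.

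The computations are otherwise routine; the step that demands care is the umbral scaling identity $-1\punt(h\iota)\equiv h(-1\punt\iota)$. Here one must correctly use the uncorrelation of an umbra with its inverse before factoring the scalar $h^n$ out of the binomial expansion, and then appeal to uniqueness of the inverse up to similarity rather than to a bare equality of umbrae. Everything else follows from linearity of $E$ and a one-line change of variables.
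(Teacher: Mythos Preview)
Your argument is correct. For the second identity your route coincides with the paper's: reduce to the first identity via the substitution $t=hu+c$ together with the scaling relation $-1\punt(h\iota)\equiv h(-1\punt\iota)$. The paper simply quotes this relation from \cite{Dinardo}, whereas you supply a self-contained verification; your check that $h\iota+h(-1\punt\iota)\equiv\varepsilon$ and the appeal to uniqueness of inverses up to similarity are both sound.

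The genuine difference lies in the first identity. The paper expands $p$ in the monomial basis, $p(x)=\sum_k c_k x^k$, and invokes the known moments $E[(-1\punt\iota)^k]=1/(k+1)$ from \cite{SIAM} to compute $E[p(-1\punt\iota)]=\sum_k c_k/(k+1)=\int_0^1 p$. You instead expand $p$ in the Bernoulli-polynomial basis (legitimate since $B_k$ is monic of degree $k$) and apply the preceding Theorem termwise. Your approach has the virtue of making the statement a corollary in the literal sense—it uses nothing beyond (\ref{(condI)}) and linearity—while the paper's version is marginally more direct computationally but relies on an external input (the moment sequence of $-1\punt\iota$) rather than on the theorem just proved. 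Both are short and clean; neither is strictly preferable.
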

\begin{proof}
Without loss of generality, assume $p(x)$ a polynomial of degree
$n,$ for some nonnegative integer $n$, that is $p(x)= \sum_{k=0}^n
c_k x^k.$  By recalling that $E[(-1 \punt \iota)^k] = {1 / {(k+1)}}$ for
all nonnegative integers $k$ \cite{SIAM}, we have
$$E[p(-1 \punt \iota)] = \sum_{k=0}^n \frac{c_k}{k + 1} = \sum_{k=0}
^n c_k \int_0^1 x^k \,{\rm d}x = \int_0^1 p(x) \,{\rm d}x.$$
The latter of equations (\ref{(2)}) follows from the former. Indeed in
$\int_c^{c+h} p(t) \, {\rm d}t$ replace $t$ by $h \, u+c.$ The result
follows by recalling the equivalence $-1 \punt (h
\iota) \equiv h (-1 \punt \iota),$ proved in \cite{Dinardo} for
any umbra $\alpha \in A.$
\end{proof}
In the following, we give an umbral proof of the so-called
{\it multiplication theorem} \cite{Roman} for the umbral Bernoulli polynomials. We
will use this property in the next section.
\begin{thm}
If $m$ is a nonnegative integer, then for all nonnegative integers $n$
\begin{equation}
\left(x + {\iota \over m}\right)^n \simeq {1 \over m} \sum_{k=0}^{m-
1} \left( x + \frac{k}{m} + \iota \right)^n.
\label{(genmul)}
\end{equation}
\end{thm}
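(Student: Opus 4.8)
The plan is to strip the equivalence down to a statement about the Bernoulli polynomials through \eqref{(berpol)}, and then to recognise the averaged sum on the right as a Bernoulli polynomial once more. First I would clear the scalings: since $(x+\iota/m)^n = m^{-n}(mx+\iota)^n$ and $(x+k/m+\iota)^n = m^{-n}(mx+k+m\iota)^n$, multiplying \eqref{(genmul)} by $m^n$ and renaming the indeterminate as $y=mx$ reduces the claim to
\begin{equation*}
(y+\iota)^n \simeq \frac{1}{m}\sum_{k=0}^{m-1}(y+k+m\iota)^n .
\end{equation*}
By \eqref{(berpol)} one has $E[(y+k+m\iota)^n]=m^n B_n\!\left(\frac{y+k}{m}\right)$, so writing $c_n(y)$ for the evaluation of the right-hand side it remains only to prove the polynomial identity $c_n(y)=B_n(y)$.

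For this I would use that the Bernoulli polynomials are the unique polynomial sequence obeying the difference relation $B_n(y+1)-B_n(y)=n y^{n-1}$ --- itself a consequence of the defining equivalence \eqref{(main)} --- together with the normalisation \eqref{(condbis)}: the difference relation fixes a degree-$n$ polynomial up to an additive constant, and \eqref{(condbis)} fixes that constant (the case $n=0$ being trivial). Both conditions I would then verify for $c_n$. Shifting $y$ by $1$ reindexes the sum defining $c_n$, the interior terms cancel, and what survives is $m^{n-1}\bigl[B_n(\tfrac{y}{m}+1)-B_n(\tfrac{y}{m})\bigr]=m^{n-1}\,n(\tfrac{y}{m})^{n-1}=n y^{n-1}$; and the substitution $u=(y+k)/m$ turns $\int_0^1 c_n(y)\,{\rm d}y$ into $m^{n}\sum_{k=0}^{m-1}\int_{k/m}^{(k+1)/m}B_n(u)\,{\rm d}u=m^{n}\int_0^1 B_n(u)\,{\rm d}u$, which vanishes by \eqref{(condbis)} exactly because the subintervals $[k/m,(k+1)/m]$ tile $[0,1]$. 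Hence $c_n=B_n$, and the reduced equivalence, so \eqref{(genmul)}, follows.

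The only delicate point is bookkeeping: keeping the powers of $m$ straight across the rescaling, the telescoping and the change of variables, and being careful to apply the uniqueness argument to $c_n$ as a whole rather than to its shifted summands separately. As an independent check I would note the slicker, though less self-contained, generating-function route: since the umbral exponential generating function of $\iota$ is $t/(e^t-1)$, that of $m\iota$ is $mt/(e^{mt}-1)$, and the geometric sum $\sum_{k=0}^{m-1}e^{kt}=(e^{mt}-1)/(e^t-1)$ cancels against it, collapsing $\sum_{k=0}^{m-1}E[e^{(y+k+m\iota)t}]$ to $m\,E[e^{(y+\iota)t}]$, from which the reduced identity drops out as the coefficient of $t^n/n!$. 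I would nonetheless keep the characterisation argument as the actual proof, since it rests only on \eqref{(main)}, \eqref{(berpol)} and \eqref{(condbis)}.
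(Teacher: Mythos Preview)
Your argument is correct. After the rescaling, the identity you are proving is precisely Raabe's multiplication formula $B_n(y)=m^{n-1}\sum_{k=0}^{m-1}B_n\!\bigl(\tfrac{y+k}{m}\bigr)$, and your uniqueness argument (difference relation plus mean-zero normalisation) is the standard clean way to obtain it; both verifications for $c_n$ are carried out correctly, including the telescoping and the change of variables.

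The paper, however, argues differently and stays inside the umbral apparatus throughout. It inserts the identity $-1\punt\iota+\iota\equiv\varepsilon$ to write $(x+\iota/m)^n\simeq(-1\punt\iota+\iota/m+x+\iota)^n$, expands binomially to separate the factor $(-1\punt\iota+\iota/m)^j$, and then evaluates that factor via the integral representation of Corollary~\eqref{(2)}: $(-1\punt\iota+\iota/m)^j\simeq\int_0^1(t+\iota/m)^j\,{\rm d}t$, which after partitioning $[0,1]$ into the subintervals $[k/m,(k+1)/m]$ and substituting collapses to $\tfrac{1}{m}\sum_k(k/m)^j$; the binomial sum then reassembles into the right-hand side of \eqref{(genmul)}. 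So the paper's key device is the inverse Bernoulli umbra and its integral interpretation, whereas yours is the classical characterisation of $B_n$. Your route is arguably more elementary and immediately recognisable to a reader who knows Bernoulli polynomials but not umbral calculus; the paper's route is a showcase of the umbral machinery just developed (in particular Corollary~\eqref{(2)}), which is the methodological point of the section. Your generating-function check is also sound and amounts to yet a third proof.
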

\begin{proof}
Since $-1 \punt \iota + \iota \equiv \varepsilon,$ we have
\begin{equation}
\left(x + {\iota \over m} \right)^n \!\! \simeq \!\! \left(  -1 \punt \iota
+ \frac{\iota}{m} + x + \iota \right)^n \!\! \simeq \!\! \sum_{j=0}^n {n \choose j} (x
+ \iota)^{n-j} \left( - 1 \punt \iota + \frac {\iota}{m}
\right)^j. \label{(2bis)}
\end{equation}
By using the former of equations (\ref{(2)}), we have
$$\left( - 1 \punt \iota + \frac{\iota}{m} \right)^{j}  \simeq
\int_0^1 \left( x + \frac{\iota}{m} \right)^{j} \, {\rm d}x \simeq
\sum_{k=0}^ {m-1} \int_{{k \over m}}^{\frac{k+1}{m}} \left( x +
\frac{\iota}{m} \right)^{j} \, {\rm d}x.$$ If we set $y = mx - k,$
then
$$\sum_{k=0}^{m-1} \int_{{k \over m}}^{\frac{k+1}{m}} \left( x + \frac
{\iota} {m} \right)^{j} \, {\rm d}x \simeq \frac{1}{m^{j+1}}
\sum_{k=0}^{m-1} \int_{0}^{1} (y + k + \iota)^{j} {\rm d}y \simeq
\frac{1}{m} \sum_{k=0}^{m-1} \left(\frac{k}{m} \right)^j.$$ The
result follows by substituting these last two equivalences in (\ref{(2bis)}).
\end{proof}
By linearity, if $p(x) \in {\cal R},$ then $p\left(x + {\iota
\over m}\right) \simeq {1 \over m} \sum_{k=0}^{m- 1} p\left(x +
\frac{k}{m} + \iota \right).$
\section{Corrections to grouped moments}
Usually, the $n$-th moment $\tilde{a}^{\prime}_n$ of the grouped distribution
is represented by
\begin{equation}
\tilde{a}^{\prime}_n = \sum_{i \in {\mathbb Z}}  \xi^n_i P\left( \xi_i -
\frac{h}{2} < X < \xi_i + \frac{h}{2} \right),
\label{(agrouped)}
\end{equation}
where $P(\cdot)$ denotes the parent distribution, $\xi_i$ are the
mid-point of classes partitioning the range and $h$ is the width
of each class. In practice, we know an estimate of
$\tilde{a}^{\prime}_n,$ since when $\tilde{a}^{\prime}_n$ is
computed, the probability
$$P\left( \xi_i - \frac{h}{2} < X < \xi_i
+ \frac{h}{2} \right)$$ is replaced by the frequency $N_i/N$ of the
corresponding class and only a finite number of classes is
considered (so the summation is over a finite number of terms). In
establishing approximate relations between the set of the raw
moments $a_n$ and the set of grouped moments
$\tilde{a}^{\prime}_n,$ a way to avoid any assumption other than
the existence of the involved moments is to employ another set of
constants $\tilde{a}_n.$ These constants $\tilde{a}_n$ are the
average of $\tilde{a}^{\prime}_n,$ when the set $\{\xi_i\}$ is
replaced by a suitable set of random points obtained by assuming
random the rounding lattice. By the umbral method, we will prove
that the expression of the raw moments $a_n$ in terms of the
constants $\tilde{a}_n$ gives the corrections to grouped moments.
The discussion on the nature of approximation in replacing
$\tilde{a}_n$ by $\tilde{a}^{\prime}_n$ goes beyond the aim of
this paper, and it has been already tackled in the literature, see
for example \cite{Abernethy, Heitjan} and \cite{Wilson}.
\subparagraph{Continuous parent distribution: Sheppard's corrections.}
Let $f$ be a continuous probability density function of a $\rv \, X$
over $(-\infty, \infty).$ As usual,
\begin{equation}
a_n = \int_{-\infty}^{\infty} t^n f(t) \,{\rm d}t
\label{(momc)}
\end{equation}
denotes the $n$-th moment of $X$ about the origin. In the following, we assume that
all absolute moments exist. The moments
calculated from the grouped frequencies are given by
\begin{equation}
\tilde{a}_n = {1 \over h} \int_{-\infty}^{\infty} t^n \int_{- {1 \over
2} h}^{{1 \over 2} h} f(t + x) \,{\rm d}x \,{\rm d}t.
\label{(infinite)}
\end{equation}
Indeed in (\ref{(agrouped)}), suppose to replace $\xi_i$ by $\xi_i + U,$
with $U$ an uniform $\rv$ over $(-{1 \over 2} h, {1 \over 2} h).$  Equation
(\ref{(infinite)}) follows by setting $\tilde{a}_n = E[\tilde{a}^{\prime}_n(U)].$
\begin{thm}[Sheppard's correction] \label{2} If the sequence $\{\tilde{a}_n\}$ in
(\ref{(infinite)}) is umbrally represented by the umbra $\tilde{\alpha}$ and the sequence $\{a_n\}$
in (\ref{(momc)}) is umbrally represented by the umbra $\alpha,$ then
\begin{equation}
\tilde{\alpha} \equiv  \alpha + h \left(-1 \punt \iota  - \frac{1}{2} \right).
\label{(shc)}
\end{equation}
\end{thm}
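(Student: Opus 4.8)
The plan is to prove (\ref{(shc)}) by verifying that the two umbrae carry the same moments, i.e. that $\tilde a_n \simeq (\alpha+\mu)^n$ for every nonnegative integer $n$, where I abbreviate $\mu := h\left(-1\punt\iota-\frac12\right)$. By the definition of umbral similarity recalled in Section 2, this moment-by-moment identity is precisely the asserted equivalence $\tilde\alpha \equiv \alpha+\mu$.

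First I would pin down the moments of the auxiliary umbra $\mu$. Applying the second identity in (\ref{(2)}) of the Corollary with the choice $c=-h/2$, together with the scaling equivalence $-1\punt(h\iota)\equiv h(-1\punt\iota)$, one obtains $\mu \equiv -1\punt(h\iota)-\frac{h}{2}$ and hence, for any polynomial $p$,
$$E[p(\mu)] = \frac{1}{h}\int_{-h/2}^{h/2} p(x)\,{\rm d}x, \qquad\text{so that}\qquad E[\mu^j] = \frac{1}{h}\int_{-h/2}^{h/2} x^j\,{\rm d}x.$$
Next I would expand the right-hand side of the target equivalence. Since $\mu$ is built only out of the Bernoulli umbra $\iota$ while $\alpha$ carries the raw moments $a_n$, the two are uncorrelated, and the binomial theorem together with the uncorrelation property gives
$$E[(\alpha+\mu)^n] = \sum_{j=0}^n \binom{n}{j}\, a_{n-j}\, E[\mu^j] = \sum_{j=0}^n \binom{n}{j}\, a_{n-j}\,\frac{1}{h}\int_{-h/2}^{h/2} x^j\,{\rm d}x.$$

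It then remains to reduce $\tilde a_n$ to the very same sum. Starting from (\ref{(infinite)}), I would interchange the order of the $t$- and $x$-integrations and substitute $s=t+x$, turning the inner integral into $\int_{-\infty}^{\infty}(s-x)^n f(s)\,{\rm d}s$; expanding $(s-x)^n$ by the binomial theorem and using (\ref{(momc)}) yields $\sum_{j}\binom{n}{j}(-x)^j a_{n-j}$. Integrating in $x$ over $[-h/2,h/2]$ and observing that $\int_{-h/2}^{h/2}(-x)^j\,{\rm d}x = \int_{-h/2}^{h/2} x^j\,{\rm d}x$ by the symmetry of the interval recovers exactly the sum obtained for $E[(\alpha+\mu)^n]$. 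Thus $\tilde a_n = E[(\alpha+\mu)^n]$ for all $n$, which is the desired similarity.

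The only genuinely delicate point is the interchange of the two integrations: this Fubini step is where the standing hypothesis that all absolute moments of $X$ exist is used, and I would flag it explicitly. Everything else is routine binomial bookkeeping once the moments of $\mu$ have been identified through the Corollary, so the heart of the argument is really the translation of the average $\frac{1}{h}\int_{-h/2}^{h/2}(\punt)\,{\rm d}x$ appearing in (\ref{(infinite)}) into the umbral symbol $-1\punt(h\iota)-\frac{h}{2}$.
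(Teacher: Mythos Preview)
Your argument is correct and follows essentially the same route as the paper: both proofs rest on a Fubini/change-of-variables step in (\ref{(infinite)}) together with the Corollary (equation (\ref{(2)})) applied with $c=-h/2$ to identify the averaging $\frac{1}{h}\int_{-h/2}^{h/2}(\cdot)\,{\rm d}x$ with the umbra $-1\punt(h\iota)-\frac{h}{2}$. The only cosmetic difference is that the paper applies (\ref{(2)}) directly to the polynomial $p(x)=(\alpha+x)^n$, thereby avoiding the explicit binomial expansion on both sides that you carry out term by term.
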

\begin{proof} We have
$$E[(\alpha+x)^n] = \sum_{k=0}^n {n \choose k} E[\alpha^k] \, x^{n-k} = \int_{-\infty}^{\infty}
\sum_{k=0}^n {n \choose k} t^k \, x^{n-k} \, f(t) \,{\rm d}t,$$
so that $(\alpha+x)^n \simeq \int_{-\infty}^{\infty} (t+x)^n f(t) \,{\rm d}t,$
for all nonnegative integers $n.$ Since
$$\tilde{a}_n  =  {1 \over h} \int_{-\infty}^ {\infty}
\int_{- {1 \over 2} h}^{{1 \over 2} h} (t+ x)^n f(t) \,{\rm d}t \,{\rm d}x = {1
\over h} \int_{- {1 \over 2} h}^ {{1 \over 2} h}
E[(\alpha + x)^n] \,{\rm d}x,$$
and due to the linearity of $E$
$${1 \over h} \int_{- {1 \over 2} h}^ {{1 \over 2} h}
E[(\alpha + x)^n] \,{\rm d}x = E \left[ {1 \over h} \int_{- {1 \over 2} h}^ {{1 \over 2} h}
(\alpha + x)^n \,{\rm d}x \right],$$
equation (\ref{(shc)}) follows by observing
$$  {1 \over h} \int_{- {1 \over 2} h}^ {{1 \over 2} h}
(\alpha + x)^n \,{\rm d}x  \simeq  \left(- 1 \punt (h \iota) + \alpha - \frac{1} {2} h \right)^n,$$
where the last equivalence follows from the latter of (\ref{(2)}) with $c=-h/2.$
\end{proof}
As a first corollary, we get equation (\ref{(Sh1)}). Note
that from equivalence (\ref{(shc)}), we have
\begin{equation}
\alpha \equiv \tilde{\alpha} + h \left( \iota + \frac{1}{2} \right).
\label{(ii)}
\end{equation}
Then, by using the binomial expansion and by applying the linear
functional $E,$ for all nonnegative integers $n$ we have
$$a_n = \sum_{j=0}^n {n \choose j} E\left[ \left(\iota +
{1 \over 2} \right)^{j} \right] \, h^j \, \tilde{a}_{n-j}.$$
In order to recover equation (\ref{(Sh1)}), all we need is to prove that
$E\left[ \left(\iota + {1 \over 2} \right)^{j} \right] = B_j (2^{1-
j} - 1)$ for all nonnegative $j.$ This is done in the following proposition.
\begin{prop} \label{1} If $\iota$ is the Bernoulli umbra, then for all nonnegative $j$
\begin{equation}
\left(\iota + {1 \over 2} \right)^{j} \simeq (2^{1-j} - 1) \, \iota^j.
\label{(propb)}
\end{equation}
\end{prop}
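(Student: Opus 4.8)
The plan is to recognize equivalence (\ref{(propb)}) as the umbral rendering of the classical identity $B_n(1/2)=(2^{1-n}-1)B_n$, and to derive it directly from the multiplication theorem (\ref{(genmul)}) rather than from any property of the Bernoulli numbers themselves. The only extra ingredient needed is the elementary scaling rule for a scalar multiple of an umbra: since $\iota/m$ is the umbra whose $n$-th power equals $m^{-n}\iota^n$ as an umbral polynomial, we have $(\iota/m)^n\simeq m^{-n}\iota^n$ for every nonnegative integer $n$.

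First I would specialize the multiplication theorem (\ref{(genmul)}) to $m=2$, obtaining
$$\left(x+\frac{\iota}{2}\right)^n \simeq \frac{1}{2}\left[(x+\iota)^n+\left(x+\frac{1}{2}+\iota\right)^n\right].$$
Then I would set the indeterminate $x=0$; this is legitimate because (\ref{(genmul)}) is an equivalence of polynomial umbral expressions and evaluation at $x=0$ preserves umbral equivalence. This yields
$$\left(\frac{\iota}{2}\right)^n \simeq \frac{1}{2}\,\iota^n+\frac{1}{2}\left(\iota+\frac{1}{2}\right)^n.$$

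Next I would replace the left-hand side by $2^{-n}\iota^n$ using the scaling rule and simply solve for $(\iota+1/2)^n$. Rearranging gives $\tfrac{1}{2}\left(\iota+\tfrac12\right)^n\simeq\left(2^{-n}-\tfrac12\right)\iota^n$, and multiplying through by $2$ produces $\left(\iota+\tfrac12\right)^n\simeq(2^{1-n}-1)\,\iota^n$, which is exactly (\ref{(propb)}) with the exponent $n$ playing the role of $j$.

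There is no serious obstacle here: the argument is a one-line specialization once the multiplication theorem is in hand, and this is precisely the strength of the symbolic approach, namely that a classical identity falls out algebraically without invoking generating functions or the explicit coefficients $B_k$. The only points requiring a little care are the bookkeeping with the scalar factor $2^{-n}$ and the justification that setting $x=0$ respects $\simeq$; both are routine.
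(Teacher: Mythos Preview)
Your argument is correct. Specializing Theorem~\ref{(genmul)} at $m=2$ and $x=0$ gives $(\iota/2)^n\simeq\tfrac12\iota^n+\tfrac12(\iota+\tfrac12)^n$, and since $(\iota/2)^n=2^{-n}\iota^n$ literally as elements of ${\cal R}[A]$ (no subtle ``scaling rule'' is needed---scalar multiplication in the polynomial ring already does this), solving for $(\iota+\tfrac12)^n$ gives the result. The substitution $x=0$ is harmless because $E$ is ${\cal R}$-linear, so $E[p]\!\mid_{x=0}=E[\,p\!\mid_{x=0}\,]$.

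This is a genuinely different and shorter route than the paper's. The paper does not invoke the multiplication theorem here; instead it introduces a second, uncorrelated Bernoulli umbra $\iota'$, inserts the telescoping identity $-1\punt(\iota'/2)+\iota'/2\equiv\varepsilon$, expands binomially, and uses the integral representation (\ref{(2)}) to evaluate $(-1\punt\iota'+1)^k$, eventually collapsing everything to $2(\iota'/2)^j-\iota^j$. Your approach is more economical within the paper's own framework, since Theorem~2.3 is already established and does all the work; the paper's proof, by contrast, is independent of the multiplication theorem and in effect re-derives a special case of it from first principles. Either way the underlying classical fact is $B_n(1/2)=(2^{1-n}-1)B_n$, and both arguments are umbral repackagings of the standard duplication identity for Bernoulli polynomials.
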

\begin{proof}
Note that for all nonnegative integers $j$ we have
\begin{eqnarray}
\left(\iota + {1 \over 2} \right)^{j} & \simeq & \left(- 1 \punt {\iota^{\prime} \over 2} +
{\iota^{\prime} \over 2} + \iota + {1 \over 2} \right)^{j}
\simeq \left[\iota + {1 \over 2} \left( - 1 \punt \iota^{\prime} + 1
\right) +
{\iota^{\prime} \over 2} \right]^{j} \nonumber \\
& \simeq & \sum_{k=0}^j {j \choose k} \left( \iota + {\iota^{\prime}
\over 2}
\right)^{j-k} {1 \over 2^k}  \left( - 1 \punt \iota^{\prime} + 1 \right)
^k,
\label{(proof1)}
\end{eqnarray}
where $\iota$ and $\iota^{\prime}$ denote uncorrelated Bernoulli umbrae.
For all nonnegative integers $k,$ from equation (\ref{(2)}) we have
$$ \left( - 1 \punt \iota^{\prime}  + 1 \right)^k \simeq \int_{0}^1 (x+1)
^k {\rm d} x = {2^{k+1} \over {k+1}} - {1 \over {k+1}} \simeq
2^{k+1} (-1 \punt \iota)^k - (-1 \punt \iota^{\prime})^k.$$
Substituting this last equivalence in (\ref{(proof1)}), we have
\begin{eqnarray*}
\left(\iota + {1 \over 2} \right)^{j} & \simeq &
 \sum_{k=0}^j {j \choose k} \left( \iota + {\iota^{\prime} \over 2}
\right)^{j-k} \left[2 \, (-1 \punt \iota)^k -  \left(-1 \punt {\iota^{\prime}
\over 2}\right)^k \right] \\
& \simeq & 2  \sum_{k=0}^j {j \choose k} (-1 \punt \iota)^k \left(
\iota + {\iota^{\prime} \over 2} \right)^{j-k}  -
\sum_{k=0}^j {j \choose k} \left(-1 \punt {\iota^{\prime} \over 2}
\right)^k \left({\iota^{\prime} \over 2} + \iota \right)^{j-k} \\
& \simeq & 2 \left( -1 \punt \iota + \iota + {\iota^{\prime} \over 2} \right)^{j} -
\left( -1 \punt {\iota^{\prime} \over 2} + {\iota^{\prime} \over 2} + \iota \right)^{j} \simeq 2 \left(  {\iota^{\prime} \over 2} \right)
^{j} - \iota^{j},
\end{eqnarray*}
by which equivalence (\ref{(propb)}) follows.
\end{proof}
As a second corollary of Theorem \ref{2}, we recover equations giving
the $n$-th moment of grouped data in terms of raw moments.
Indeed, in equivalence (\ref{(shc)}), by using the binomial expansion and by applying the linear
functional $E,$ we have
\begin{equation}
\tilde{a}_n = \sum_{j=0}^n {n \choose j} E\left[ \left(-1 \punt \iota
-{1 \over 2} \right)^{j} \right] \, h^j \, a_{n-j},
\label{(Sh)}
\end{equation}
for all nonnegative integers $n.$ As before, we need to evaluate
the moments of $-1 \punt \iota -{1 \over 2}.$ This can be done
by using only equation (\ref{(2)}). Indeed, for all nonnegative
integers $j$ we have
\begin{equation}
E\left[\left(-1 \punt \iota -{1 \over 2} \right)^{j} \right] =
\int_0^1 \left (x - {1 \over 2} \right)^{j} {\rm d} x
= \left\{ \begin{array}{ll}
0, & \hbox{if $j$ is odd}, \\
\frac{1}{j+1} \left({1 \over 2}\right)^{j}, & \hbox{if $j$ is even}.
\end{array} \right. \label{(imp)}
\end{equation}
So from equation (\ref{(Sh)}) and by using (\ref{(imp)}), we have
\begin{equation}
\tilde{a}_n  = \sum_{j=0}^{\lceil n/2 \rceil} {n \choose 2j} \left(
\frac{h}{2} \right)^{2j} \frac{a_{n - 2j}}{2 j + 1}.
\label{(Sh3)}
\end{equation}
\begin{rem}
{\rm Theorem \ref{2} still holds, when the moments are referred to a parent distribution
over $(a, b).$ In (\ref{(momc)}) and (\ref{(infinite)}), instead of using the domain of integration
$(-\infty, \infty),$ we integrate over $(a,b)$ and we do the same
in the proof of Theorem \ref{2}. In (\ref{(agrouped)}), we refer the summation
to $i=1, \ldots, p,$ with $p$ the number of classes partitioning $(a,b)$ with
width $h$. Here the $\rv \, \tilde{a}^{\prime}_n(U)$ is obtained by replacing $\xi_i$ with
$a + (i - {1 \over 2}) h + U.$}
\end{rem}
\subparagraph{Discrete parent distribution.}
Assume that $m$ equidistant consecutive values of a discrete $\rv$ are grouped
into a frequency class of width $h.$ The $m$ smaller intervals of width
$h/m$ go to make up the class width $h$ in such a way that the $m$
values of the variable represent the mid-points of the sub-intervals.
Without loss of generality, we assume that
$$P \left( X = {i h \over m} \right) = {h \over m} f\left( {i h \over
m} \right), \qquad i \in {\mathbb Z},$$
with
$$ f\left( {i h \over m} \right) \geq 0 \quad \forall i \in {\mathbb
Z} \quad \hbox{\rm and} \quad \sum_{i \in {\mathbb Z}}{h \over m}
f\left( {i h \over m} \right) = 1.$$
In (\ref{(agrouped)}), replace $\xi_i$ by
$$\left(m \, i + U - {{m-1} \over 2}\right) {h \over m},$$
with $U$ a $\rv$ which has any of $m$ possible values
$0,1,\ldots, m-1$ equally probable.
The $m$ values of $\tilde{a}^{\prime}_n(U)$ corresponding to the $m$ distinct
methods of grouping a discrete distribution are
$$\tilde{a}^{\prime}_n(U) = \sum_{i \in {\mathbb Z}} \left[ \left( m \, i + U -
{{m-1} \over {2}} \,
\right) {h \over m} \right]^n
\sum_{j=0}^{m-1} {h \over m} f\left[ \left( m \, i + U - j \right) {h
\over m} \right].$$
By doing some calculations, we recover the expression of moments calculated from the grouped frequencies
\begin{equation}
\tilde{a}_n=E[\tilde{a}^{\prime}_n(U)] = {1 \over m} \sum_{j=0}^{m-1} {h \over m}
\sum_{s \in {\mathbb Z}} \left( {s h \over m}
- {{m-1-2j} \over {2m}} \, h \right)^n  f\left( {s h \over m} \right).
\label{(momdisgr)}
\end{equation}
In the following, we denote by $a_n$ the $n$-th moment of the discrete
parent distribution, that is
\begin{equation}
a_n= {h \over m} \sum_{s \in {\mathbb Z}} \left(s h \over m\right)^n
f\left( {s\over m} \right).
\label{(momdis)}
\end{equation}
\begin{thm}  [Corrections] \label{2bis} If the sequence $\{\tilde{a}_n\}$
in (\ref{(momdisgr)}) is umbrally represented by the umbra $\tilde{\alpha}$ and
the sequence $\{a_n\}$ in (\ref{(momdis)}) is umbrally represented by the
umbra $\alpha,$ then
\begin{equation}
\tilde{\alpha} \equiv  \alpha  + h \left(- 1 \punt \iota - {1 \over 2} \right) + {h \over m} \left( \iota +
{1 \over 2} \right).
\label{(shcdis)}
\end{equation}
\end{thm}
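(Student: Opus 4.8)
The plan is to follow the proof of Theorem~\ref{2}, but with the single uniform average replaced by the double average over the $m$ sub-intervals, and then to collapse the resulting finite sum into a closed umbral expression by means of the multiplication theorem~(\ref{(genmul)}). First I would record the discrete analogue of the shift used in Theorem~\ref{2}: expanding $(\alpha+c)^n$ binomially, applying $E$, and using the definition (\ref{(momdis)}) of the moments $a_n$, one gets for every $c\in{\mathbb R}$
$$(\alpha+c)^n\simeq\frac{h}{m}\sum_{s\in{\mathbb Z}}\left(\frac{sh}{m}+c\right)^n f\left(\frac{sh}{m}\right).$$
Setting $c_j=-\frac{m-1-2j}{2m}\,h$ and comparing with (\ref{(momdisgr)}), this identity rewrites the grouped moments as $\tilde{a}_n\simeq\frac{1}{m}\sum_{j=0}^{m-1}(\alpha+c_j)^n$, so that $\tilde{\alpha}^n\simeq\frac{1}{m}\sum_{j=0}^{m-1}(\alpha+c_j)^n$ for all nonnegative integers $n$.

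Next I would separate the scalar part of the shift. Since $c_j=\frac{h}{m}\,j-\frac{h(m-1)}{2m}$, I introduce the translate $\gamma=\alpha-\frac{h}{2}+\frac{h}{2m}$, so that $\alpha+c_j\equiv\gamma+\frac{h}{m}\,j$. Collecting the constants $-\frac{h}{2}+\frac{h}{2m}$ on the right-hand side of (\ref{(shcdis)}) and recalling $h(-1\punt\iota)\equiv-1\punt(h\iota)$, the target equivalence (\ref{(shcdis)}) reduces to the single identity
$$\frac{1}{m}\sum_{j=0}^{m-1}\left(\gamma+\frac{h}{m}\,j\right)^n\simeq\left(\gamma+h(-1\punt\iota)+\frac{h}{m}\,\iota\right)^n.$$

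To prove this, I would invoke the multiplication theorem. Rescaling (\ref{(genmul)}) by $h$ through the homogeneity $h^n(x+\frac{\iota}{m})^n=(hx+\frac{h}{m}\iota)^n$ yields, for a shift $y$,
$$\left(y+\frac{h}{m}\,\iota\right)^n\simeq\frac{1}{m}\sum_{k=0}^{m-1}\left(y+\frac{h}{m}\,k+h\iota\right)^n.$$
Applying this with $y=\gamma+h(-1\punt\iota)$ — so that the left-hand side is exactly the umbral expression above — and interpreting $h(-1\punt\iota)\equiv-1\punt(h\iota)$ as the inverse of the very copy of $h\iota$ produced on the right, each summand contains $h(-1\punt\iota)+h\iota\equiv-1\punt(h\iota)+h\iota\equiv\varepsilon$, which is annihilated by the augmentation, leaving $\gamma+\frac{h}{m}\,k$. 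The sum then coincides with the required left-hand side and the identity follows.

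The main obstacle is precisely this last bookkeeping of the Bernoulli umbrae: one must guarantee that the $-1\punt\iota$ carried by $\tilde{\alpha}$ is exactly the inverse of the copy of $\iota$ created by the multiplication theorem, so that the inserted $h\iota$ cancels against it through the augmentation. This is the same cancellation device already exploited in Proposition~\ref{1}, and its legitimacy rests on the scaling rule $-1\punt(h\iota)\equiv h(-1\punt\iota)$ together with the uncorrelation of an umbra with its inverse, both available in the saturated calculus.
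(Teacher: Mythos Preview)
Your proposal is correct and follows essentially the same route as the paper's proof. Both arguments first rewrite $\tilde{a}_n$ as the average $\frac{1}{m}\sum_{j=0}^{m-1}(\alpha+c_j)^n$ and then invoke the multiplication theorem~(\ref{(genmul)}) together with the cancellation $-1\punt\iota+\iota\equiv\varepsilon$; the only cosmetic difference is that the paper inserts $-1\punt\iota+\iota$ into the sum and then collapses via~(\ref{(genmul)}), whereas you start from the closed expression, expand via~(\ref{(genmul)}), and then cancel --- the same step run in the opposite direction.
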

\begin{proof}
By linearity, we have
$$E\left[\left(\alpha - {{m-1-2j} \over {2m}} \, h\right)^n\right] =
{h \over
m} \sum_{s \in {\mathbb Z}} \left( {s h \over m}
- {{m-1-2j} \over {2m}} \, h \right)^n  f\left( {s h \over m}
\right).$$
The result follows by observing that
\begin{eqnarray}
\tilde{\alpha}_n  & \simeq &   {1 \over m} \sum_{j=0}^{m-1} \left(
\alpha - {{m-1-2j} \over {2m}} \, h \right)^n \nonumber \\
& \simeq & {h^n \over m} \sum_{j=0}^{m-1} \left( \left\{  - 1 \punt \iota + {\alpha \over
h} - {{m-1} \over {2m}} \right\} + {j \over m}  + \iota \right)^n
\nonumber \\
& \simeq & h^n \left(- 1 \punt \iota + {\alpha \over h}  - {{m-1}
\over {2m}} + {\iota \over m}  \right)^n,
\label{(eq1)}
\end{eqnarray}
where equivalence (\ref{(eq1)}) follows from equivalence (\ref{(genmul)}), by replacing
$x$ with $\left\{ - 1 \punt \iota + {\alpha \over h}  - {{m-1} \over {2m}}
\right\}.$ Suitably rearranging the terms, we obtain equivalence (\ref{(shcdis)}).
\end{proof}
It is interesting to compare equivalence (\ref{(shc)}) with
(\ref{(shcdis)}). In this last equivalence, we find just one
addend more, which is an umbra whose moments are given in
Proposition \ref{1}. Obviously, if $m \rightarrow \infty$ from
(\ref{(shcdis)}) we recover (\ref{(shc)}), as stated by Craig in
\cite{Craig}. Moreover, since from (\ref{(shcdis)}) we find
\begin{equation}
\alpha \equiv \tilde{\alpha} + h \left( \iota + {1 \over 2} \right) + {h \over m} \left( -1 \punt \iota
- {1 \over 2} \right),
\label{(iii)}
\end{equation}
equivalence (\ref{(iii)}) differs from equivalence (\ref{(ii)}) for an umbra whose moments are given in
(\ref{(imp)}). These observations turn to be useful when we seek expression of raw moments $\{a_n\}$
in terms of grouped moments $\{\tilde{a}_n\}$. To the best of our knowledge, the last version available
is given by Craig in \cite{Craig}, but its structure is quite complex and involves
integer partitions. Here we give a different expression. By applying the binomial
expansion and the linear functional $E$ to equivalence (\ref{(iii)}), for all nonnegative
integers we have
\begin{equation}
a_n = \sum_{k=0}^n {n \choose k} E \left[ \left\{\tilde{\alpha} + h \left( \iota + {1 \over 2} \right) \right\}^{n-k} \right]
E \left[ \left( -1 \punt \iota - {1 \over 2} \right)^{k} \right] \frac{h^k}{m^k}.
\label{(grdis)}
\end{equation}
Moments of $\tilde{\alpha} + h \left( \iota + {1 \over 2} \right)$ are given by Sheppard's corrections (\ref{(Sh1)})
in the continuous case, while the moments of  $-1 \punt \iota - {1 \over 2}$ are given in (\ref{(imp)}). So
all we need is to replace these expressions in equation (\ref{(grdis)}):
$$a_n = \sum_{k=0}^{\lceil {n \over 2} \rceil} {n \choose {2k}} \left( \frac{h}{2m} \right)^{2k} \frac{1}{2k+1}
\sum_{j=0}^{n- 2k} {{n-2k} \choose j} \left( 2^ {1-j} -1 \right) B_j \, h^j \, \tilde{a}_{n - 2k - j}.$$
By a {\tt Maple} procedure, it is straightforward to verify that the first eight moments $\{a_1, \ldots, a_8\},$
computed by means of this formula, are the same as given by Craig in \cite{Craig}.
\section{Corrections to multivariate grouped data}
In \cite{Bernoulli}, it has been shown that the notion of multiset is the key for
dealing with multivariate moments in umbral syntax. Here we recall briefly
the notations.

Let $M$ be a multiset of umbral monomials. A {\it multiset} $M$ is a pair
$(\bar{M}, f),$ where $\bar{M}$ is a set, called the {\it support} of the
multiset, and $f$ is a function from $\bar{M}$ to nonnegative integers.
For each $\mu \in \bar{M},$ $f(\mu)$ is the {\it multiplicity} of $\mu.$
We denote a multiset $(\bar{M}, f)$ simply by $M.$
When the support of $M$ is a finite set, say $\bar{M}=\{\mu_1, \mu_2,
\ldots, \mu_j\},$ we write
$$M = \{\mu_1^{(f(\mu_1))},\mu_2^{(f(\mu_2))}, \ldots, \mu_j^{(f
(\mu_j))}\} \quad \hbox{or} \quad M = \{\underbrace{\mu_1, \ldots,
\mu_1}_{f (\mu_1)}, \ldots, \underbrace{\mu_j, \ldots,
\mu_j}_{f(\mu_j)}\}.$$
Set
\begin{equation}
\mu_{M} =  \prod_{\mu \in \bar{M}} \mu^{f(\mu)}. \label{(momb4)}
\end{equation}
For example, if $M=\{\mu_1^{(2)}, \mu_2^{(1)}, \mu_3^{(4)} \},$ we denote by $\mu_M$ the product
$\mu_1^2 \mu_2 \mu_3^4.$
\par
 A {\it multivariate moment} is the element of ${\cal R}$ corresponding to the
umbral monomial $\mu_M$ via the evaluation $E,$ i.e.
\begin{equation}
E[\mu_M] = m_{t_1 \ldots \, t_j},
\label{(not)}
\end{equation}
where $t_i=f(\mu_i)$ for $i=1,2,\ldots,j.$
For example if $M=\{\mu_1^{(2)}, \mu_2^{(1)}, \mu_3^{(4)} \},$ we
have $E[\mu_{M}]=m_{2 1 4}.$ When the umbral monomials $\mu_i$ are
uncorrelated, $m_{t_1 \ldots \, t_j}$ becomes the product of
the moments of $\mu_i.$ More details on the meaning and the use of the symbol
$\mu_M$ are given in \cite{Bernoulli}.
\par
Suppose ${\bf X}=(X_1, X_2, \ldots, X_j)$ a multivariate $\rv$ with the joint density
function $f_{\bf X}({\bf x})$ over ${\mathbb R}^j.$ Note that by using the same arguments,
we can deal with any range of bounded rectangle type. As usual
\begin{equation}
m_{t_1 \ldots \, t_j} = \int_{{\mathbb R}^j} x_1^{t_1} \cdots x_j^{t_j} f_{\bf X}({\bf x}) \, {\rm d}{\bf x}
\label{(mulmom)}
\end{equation}
denotes the multivariate moment of ${\bf X}$ of order $(t_1, \ldots, t_j).$ The moments calculated
from the grouped frequencies are given by
\begin{equation}
\tilde{m}_{t_1 \ldots \, t_j} = \frac{1}{h_1 \cdots h_j} \int_{{R}_j} \int_{{\mathbb R}^j}
(x_1 + z_1)^{t_1} \cdots (x_j + z_j)^{t_j} f_{\bf X}({\bf x}) \, {\rm d}{\bf x} \, {\rm d}{\bf z},
\label{(mulgr)}
\end{equation}
where
$$R_j = \left\{ {\bf z}= (z_1, \ldots, z_j) \in {\mathbb R}^j : z_k \in \left(-{1 \over 2} h_k, {1 \over 2} h_k \right) \, \forall k \in
\{1,\ldots,j\} \right\}$$
and $\{h_k\} \in {\mathbb R} \backslash \{0\}$ are the width window for any component. A proof of $(\ref{(mulgr)})$ can be done similarly
to the one sketched for the univariate case $(\ref{(agrouped)}).$
\begin{thm}[Multivariate Sheppard's correction] \label{4} If the sequence $\{\tilde{m}_{t_1 \ldots \, t_j}\}$ in
(\ref{(mulgr)}) is umbrally represented by the umbral monomial $\tilde{\mu}_M,$ with $M$ a multiset of finite
support $\{\mu_1, \ldots,\mu_j\},$ and the sequence $\{m_{t_1 \ldots \, t_j}\}$ in (\ref{(mulmom)}) is umbrally represented
by the umbral monomial $\mu_M,$ then
\begin{equation}
\tilde{\mu}_M \equiv  \left[\mu + h \left(-1 \punt \iota  - \frac{1}{2} \right) \right]_M
\label{(shcmul)}
\end{equation}
where we set
$$
\left[\mu + h \left(-1 \punt \iota  - \frac{1}{2} \right) \right]_M = \prod_{k=1}^j \left[ \mu_k + h_k \left( -1 \punt \iota_k
- \frac{1}{2} \right) \right]^{t_k}
$$
with $\{\iota_k\}$ uncorrelated Bernoulli umbrae.
\end{thm}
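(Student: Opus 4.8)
The plan is to mirror the proof of Theorem \ref{2}, with the single umbra $\alpha$ replaced by the umbral monomial $\mu_M$ and with the Cartesian product structure of the grouping region $R_j$ matched against the product structure of $\mu_M$. First I would record the multivariate analogue of the opening equivalence of Theorem \ref{2}: expanding each factor of $\prod_{k=1}^j (\mu_k + z_k)^{t_k}$ by the binomial theorem, applying $E$, and matching the outcome against (\ref{(mulmom)}) through the definition (\ref{(not)}) of the multivariate moments, I obtain, for every fixed ${\bf z}=(z_1, \ldots, z_j)$,
$$\prod_{k=1}^j (\mu_k + z_k)^{t_k} \simeq \int_{{\mathbb R}^j} \prod_{k=1}^j (x_k + z_k)^{t_k} f_{\bf X}({\bf x}) \, {\rm d}{\bf x}.$$
Substituting this into (\ref{(mulgr)}) and moving $E$ outside the integral over $R_j$ by linearity gives
$$\tilde{m}_{t_1 \ldots \, t_j} = E\left[ \frac{1}{h_1 \cdots h_j} \int_{R_j} \prod_{k=1}^j (\mu_k + z_k)^{t_k} \, {\rm d}{\bf z} \right].$$

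Next, since $R_j$ is the Cartesian product of the intervals $(-h_k/2, h_k/2)$ and the $k$-th factor of the integrand depends only on $z_k$, the multiple integral factorises as $\prod_{k=1}^j \frac{1}{h_k} \int_{-h_k/2}^{h_k/2} (\mu_k + z_k)^{t_k} \, {\rm d}z_k$. To each single-variable factor I would apply the latter of (\ref{(2)}) with $c=-h_k/2$, treating $\mu_k$ as a constant and introducing a fresh Bernoulli umbra $\iota_k$, precisely as in the closing display of the proof of Theorem \ref{2}; together with the equivalence $-1 \punt (h_k \iota_k) \equiv h_k (-1 \punt \iota_k)$ this yields
$$\frac{1}{h_k} \int_{-h_k/2}^{h_k/2} (\mu_k + z_k)^{t_k} \, {\rm d}z_k \simeq \left[ \mu_k + h_k \left( -1 \punt \iota_k - \frac{1}{2} \right) \right]^{t_k}.$$

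The step demanding the most care, and the main obstacle, is the passage from these factorwise equivalences to the equivalence of the full products asserted in (\ref{(shcmul)}): the umbrae $\mu_1, \ldots, \mu_j$ are in general correlated, so products of umbrally equivalent polynomials need not be umbrally equivalent. The resolution is the uncorrelation property of the evaluation $E$: I would take the $\iota_k$ distinct and uncorrelated both among themselves and from every $\mu_k$. Then, applying $E$ to the right-hand side of (\ref{(shcmul)}), the $\iota_k$-powers separate out as the real scalars $E\left[(h_k(-1 \punt \iota_k - \frac{1}{2}))^{t_k-i_k}\right] = \frac{1}{h_k}\int_{-h_k/2}^{h_k/2} z_k^{t_k-i_k}\,{\rm d}z_k$, which are exactly the constants produced by integrating out $z_k$ above, while the remaining correlated umbrae collapse to the joint moments $E[\mu_1^{i_1}\cdots\mu_j^{i_j}] = m_{i_1 \ldots i_j}$. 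Comparing the resulting expansion term by term with that of $\tilde{m}_{t_1 \ldots \, t_j}$ shows the two evaluations coincide for every multiset $M$, which is exactly equivalence (\ref{(shcmul)}). Finally I would note that $j=1$ recovers Theorem \ref{2}, and that, as remarked after (\ref{(mulmom)}), nothing changes if ${\mathbb R}^j$ is replaced by a bounded rectangle.
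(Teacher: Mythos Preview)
Your proposal is correct and follows essentially the same route as the paper: establish $E\bigl[\prod_k(\mu_k+z_k)^{t_k}\bigr]=\int_{\mathbb R^j}\prod_k(x_k+z_k)^{t_k}f_{\bf X}({\bf x})\,{\rm d}{\bf x}$, factor the integral over $R_j$, and apply the latter of (\ref{(2)}) with $c=-h_k/2$ to each factor. The paper's proof is terser and simply writes the chain of equivalences without isolating the uncorrelation issue you highlight; your explicit check that the uncorrelated $\iota_k$ may be split off from the correlated $\mu_k$ is a welcome clarification rather than a different argument.
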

\begin{proof} Due to linearity, we have
$$E[(\mu_1 + z_1)^{t_1} \cdots (\mu_j + z_j)^{t_j}] = \int_{{\mathbb R}^j} (x_1 + z_1)^{t_1} \cdots
(x_j + z_j)^{t_j} f_{\bf X}({\bf x}) \, {\rm d}{\bf x}.$$
Moreover, from (\ref{(mulgr)}) we have
\begin{eqnarray*}
\tilde{m}_{t_1 \ldots \, t_j} &  \simeq &  \frac{1}{h_1 \cdots h_j} \int_{{R}_j}
(\mu_1 + z_1)^{t_1} \cdots (\mu_j + z_j)^{t_j} {\rm d}{\bf z}, \\
& \simeq & \prod_{k=1}^j {1 \over h_k} \int_{-{1 \over 2} h_k}^{{1 \over 2}h_k} (\mu_k + z_k)^{t_k} {\rm d} z_k
\simeq \prod_{k=1}^j  \left( \mu_k + h_k \left( -1 \punt \iota_k
- \frac{1}{2} \right) \right)^{t_k}
\end{eqnarray*}
by which the result follows.
\end{proof}
In the support of the multiset $M,$ if we choose
$$\mu_k = \tilde{\mu}_k + h_k \left( \iota_k + \frac{1}{2} \right), \,\, k=1, \ldots, j$$
then equivalence $(\ref{(shcmul)})$ becomes an identity and so by notation (\ref{(not)})
\begin{equation}
\mu_M \equiv  \left[\tilde{\mu} + h \left( \iota + \frac{1}{2} \right) \right]_M
\label{(shcmul2)}
\end{equation}
where again
$$\left[\tilde{\mu} + h \left( \iota + \frac{1}{2} \right) \right]_M = \prod_{k=1}^j \left[ \tilde{\mu}_k + h_k \left( \iota_k
+ \frac{1}{2} \right) \right]^{t_k}.$$
Equivalence (\ref{(shcmul2)}) can be easily implemented in any symbolic package. Indeed,
in order to recover expressions of raw multivariate moments in terms of grouped moments,
we need to multiply summations like
$$\sum_{s_k=1}^{n} {n \choose s_k} \tilde{\mu}_k^{s_k} h_k^{n - s_k} \left( 2^{1-n+s_k} - 1\right) B_{n-s_k}$$
corresponding to the $n$-th power of $\tilde{\mu}_k + h_k \left( \iota_k
+ \frac{1}{2} \right),$ and then replace occurrences of products like $\tilde{\mu}_1^{s_1}
\tilde{\mu}_2^{s_2} \cdots \tilde{\mu}_j^{s_j}$ with $\tilde{m}_{s_1 \ldots s_j}.$
\par
For the sake of brevity, we skip the details of the proof of corrections when
the multivariate parent distribution is discrete. This can be done taking the
same way used for the univariate parent distribution, as we have done for the
continuous case. Here, the corrections to moments due to the grouping
can be formulated in umbral terms as:
\begin{equation}
\mu_M \equiv \left[ \tilde{\mu} + h \left( \iota + {1 \over 2} \right) + {h \over m} \left( -1 \punt \iota
- {1 \over 2} \right) \right]_M,
\label{(iv)}
\end{equation}
where, by the symbol in the right hand side of the previous equivalence, we denote the following
product
$$ \prod_{k = 1}^j  \left[ \tilde{\mu}_k + h_k \left( \iota_k + {1 \over 2} \right) + {h_k \over m_k} \left( -1 \punt \iota_k
- {1 \over 2} \right) \right],$$
where $m_k$ are the number of consecutive values grouped in a frequency class of width $h_k.$
The multivariate version of similarity (\ref{(shcdis)}) is
\begin{equation}
 \tilde{\mu}_M \equiv \left[ \mu + {h \over m} \left( \iota + {1 \over 2} \right) +  h \left( -1 \punt \iota
- {1 \over 2} \right) \right]_M.
\label{(v)}
\end{equation}
\subparagraph{Concluding remarks.}
The main goal of this paper is to show how the corrections of moments resulting
from grouping into classes may be summarized in few closed-form formulae.
Moreover, the multivariate formulae can be constructed from the univariate ones,
by a suitable indexing of umbral monomials with a multiset.

Once more, this paper shows how the classical umbral calculus
should be taken into account for managing sequence of numbers
related to $\rv$'s, since  many calculations are reduced. For
example, the reader interested in recovering corrections for
cumulants and factorial moments, by using the classical umbral
calculus, can refer to \cite{Dinardoeurop}. The noteworthy
simplification in the expression of corrections, when referred to
cumulants instead of moments, was first pointed out by Langdon and
Ore \cite{Langdon} for a continuous parent distribution over $(-
\infty, \infty)$. By using the umbral syntax introduced for the
$\alpha$-cumulant umbra and the $\alpha$-factorial umbra, it is
possible to recover these corrections by one line proof both for
continuous and for discrete parent distributions.

The umbral techniques applied to Sheppard's corrections open the
way to deal with new problems that would be interesting to
explore. Indeed, the umbral version of Sheppard's corrections,
here introduced, refers to the averaging interpretation of these
corrections, as proposed by Craig \cite{Craig}. Different
interpretations give rise to different forms of Sheppard's
corrections, see for example \cite{Schneeweiss}.  It would be
interesting to see if the umbral method simplifies the calculation
apparatus and add some new formulae also for these different
interpretations.

\end{document}